\documentclass[a4paper,legno,11pt]{article}

\usepackage[utf8]{inputenc}
\usepackage[T1]{fontenc}
\usepackage{xspace,
	amsthm,
	amsmath,
	amssymb,
	bbm,
	tikz,
	graphicx,
	subfig,
	keyval}
\DeclareGraphicsExtensions{.png, .pdf}

\usepackage[hidelinks]{hyperref}

\usepackage{geometry}
\geometry{hmargin={3.5cm,3.5cm}}
\geometry{vmargin={5cm,3cm}}


\tikzstyle{nodino}=[circle,draw,fill,inner sep=0pt,minimum size=0.5mm]
\tikzstyle{infinito}=[circle,inner sep=0pt,minimum size=0mm]
\tikzstyle{nodo}=[circle,draw,fill,inner sep=0pt, minimum size=0.5*width("k")]
\tikzstyle{nodo_vuoto}=[circle,draw,inner sep=0pt, minimum size=0.5*width("k")]
\usetikzlibrary{graphs}
\tikzset{every loop/.style={min distance=10mm,in=300,out=240,looseness=10}}
\tikzset{place/.style={circle,thick,draw=blue!75,fill=blue!20,minimum
		size=6mm}}
\tikzset{place2/.style={circle,thick,draw=red!75,fill=red!20,minimum
		size=6mm}}


\newcommand{\R}{\mathbb R}

\newcommand{\f}{\frac}
\newcommand{\rr}{{\mathbb R}}

\newcommand{\zz}{{\mathbb Z}}

\newcommand{\G}{{\mathcal{G}}}
\newcommand{\udot}{\|u'\|_{L^2(\mathcal{G})}}
\newcommand{\uLp}{\|u\|_{L^p(\mathcal{G})}}
\newcommand{\uLtwo}{\|u\|_{L^2(\mathcal{G})}}
\newcommand{\HmuG}{H_\mu^1(\mathcal{G})}

\newcommand{\elevel}{\mathcal E}

\newcommand\vv{\textsc{v}}

\newcommand{\dx}{\,dx}


\theoremstyle{plain} 
\newtheorem{thm}{Theorem}[section]

\newtheorem{prop}[thm]{Proposition} 

\theoremstyle{definition}

\theoremstyle{definition}

\theoremstyle{remark} 

\title{Quantum graphs and dimensional crossover: the honeycomb}
\author{Riccardo Adami, Simone Dovetta, Alice Ruighi}
\begin{document}
\maketitle	

\begin{abstract}
We summarize features and results on the problem of the existence of Ground States for the Nonlinear Schr\"odinger Equation on doubly-periodic metric graphs. We extend the results known for the two--dimensional square grid graph to the honeycomb, made of infinitely-many identical hexagons. Specifically, we show how the coexistence between one--dimensional and two--dimensional scales in the graph structure leads to the emergence of threshold phenomena known as dimensional crossover.

\end{abstract}

	\section{Introduction}
In the last decade there has been a dramatic increase in the study of the dynamics of systems on metric graphs, or {\em networks}. This is mainly due to two different issues: first, the extensive use of mathematics in topics traditionally confined to a more qualitative approach (e.g. biology, social sciences, economics); second, the flexibility and the simplicity of networks as a mathematical environment to model phenomena occurring in the actual world. 

Networks enter in the description of evolutionary phenomena on branched structures, namely, one-dimensional complexes made of {\em edges}, either finite or infinite, meeting at special points called {\em vertices}. Edges and vertices define the {\em topology} of the graph. The {\em metric} structure is defined by associating to every edge a {\em length} and then an arclength. This is easily accomplished by associating to every edge $e$ a coordinate
$x \in [0, \ell_e]$, where $\ell_e$ is the length of the edge.

Such a schema applies to signals propagating in networks, circuits, and to more recent scientific and technological challenges of the new emerging field of research called {\em Atomtronics}.

The first appearance of metric graphs in the mathematical modeling of natural systems dates back to 1953 and is due to Ruedenberg and Scherr \cite{RS53}, who modeled a naphtalene array as a network of edges and vertices arranged in a hexagonal lattice, like a honeycomb. Then, a Hamiltonian operator representing the quantum energy of the system was defined on such a structure, and its spectrum was computed in order to deduce the possible values of the energy of the valence electrons. The paper is not only a milestone in physical chemistry, but it also introduces some important mathematical tools like the so-called {\em Kirchhoff's conditions} at the vertices of the graph, and it opens the research field of quantum graphs. Dealing with a standard quanto-mechanical system, the model is governed by a linear equation, i.e. the Schr\"odinger equation of the system.

Since then, the use of metric graphs has become widespread in the literature, exiting the realm of quantum mechanics and extending to electromagnetism, acoustics, and many others physically relevant contexts. However, most of the models were linear. The first systematic introduction to nonlinear dynamics on graphs was given by Ali Mehmeti \cite{alimehmeti} in a nowadays classical treatise published in 1994, but one had to wait around three decades to see the analysis of the dynamics of a specific nonlinear model, first given in \cite{acfn2011} and concerning the effect of the impact of a fast soliton of the Nonlinear Schr\"odinger Equation (NLSE) on the vertex of an infinite star-graph. After this result, the resarch on the NLSE on graphs underwent an important development, especially because of great technical advances on the study of the mathematical aspects of the nonlinear Schr\"odinger Equation (especially following the seminal papers by Keel and Tao \cite{kt} and by Kenig and Merle \cite{km}) from one side, and because of the rapid evolution of the technology of Bose-Eintein condensates (BEC) from the other, and in particular of the new accomplishments in the construction of {\em traps} of various shapes, to be used in BEC experiments.

In order to motivate the mathematical problem we are dealing with, let us be more specific on this point.
A Bose-Einstein condensate is a system of a large (from thousands to millions) number of identical bosons, usually magnetically and/or optically confined in a spatial region, called {\em trap}. As predicted by Bose \cite{bose} and Einstein \cite{einstein}, under a prescribed value of the temperature, called ''critical value'', the system collapses into a very peculiar and non-classical state, in which:

\begin{itemize}

\item Every particle acquires an individual wave function (which is in general not the case for many-body systems, that are given a collective wave function only).

\item The wave function is the same for all particles, and is called {\em wave function of the condensate}.

\item The wave function of the condensate  solves the following variational problem:
\begin{equation} \label{gp}
\min_{u \in H^1 (\Omega), \int |u|^2 = N} E_{GP}(u)
\end{equation}
where
\begin{itemize}
\item $E_{GP}$ is the {\em Gross-Pitaevskii energy (GP) functional}, namely
\begin{equation} \label{gpe}
E_{GP} (u) \ = \ \| \nabla u\|_{L^2(\Omega)}^2 + 8 \pi \alpha \| u \|_{L^4 (\Omega)}^4
\end{equation}
($\alpha$ is the scattering length of the two-body interaction between the particles in the condensate);

\item $\Omega$ is the trap where the condensate is confined;

\item $N$ is the number of particles in the condensate;

\item provided it exists, the minimum corresponds to a standing wave for the Gross-Pitaevskii {\em Nonlinear Schr\"odinger} Equation
$$ i \partial_t \psi (t,x) \ = \ - \Delta \psi (t,x) + 32 \pi \alpha | \psi (t,x) |^2 \psi (t,x).
$$

\end{itemize}
\end{itemize}

\noindent Then it becomes an important issue to solve the problem of minimizing the functional \eqref{gpe} under the constraint $\int_\G |u|^2 dx = \mu$ given in \eqref{gp}.
As one might expect, the result heavily depends on $\Omega$, not only for what concerns the actual shape of the minimizer, but also for the sake of its mere existence. It is indeed this last issue that has been mostly studied during the last years, and will be the subject of the present note.

\subsection{Existence of Ground States: Results}
From now on, we consider a metric graph $\mathcal G$ and the NLS energy functional defined as
\begin{equation}
\label{energy}
E (u, \mathcal G) \ = \ \frac 1 2\int_\G |u'|^2 dx - \f 1 p \int_\G |u|^p dx.
\end{equation}
The first term is called {\em kinetic term}, as it represents the kinetic energy associated to the system, while the second is the {\em nonlinear term}.

The main difference of \eqref{energy} with respect to the GP energy \eqref{gpe} is that in \eqref{energy} a more general nonlinearity power is considered instead of the only case $p=4$, but we restrict to the so-called focusing case, where the nonlinear term has a negative sign, and encodes the fact that the two-body interaction between the particles is attractive. 

Owing to the choice of the sign, it is clear that 
there is
a competition between the two terms: the kinetic term favours widespread signals, while the nonlinear term prevents the minimizers from dispersing too much. When a minimizer exists, it always realizes a compromise between the two terms and the two corresponding tendencies: spreading or squeezing.

We study the problem of minimizing the energy \eqref{energy} with the constraint of constant mass, namely
	\begin{equation}
		\label{mass}
		\uLtwo^2= \int_\G |u|^2 \, dx = \mu > 0.
	\end{equation}
We shall use the notation	
	\begin{equation}
		\label{inf}
		\elevel(\mu):=\inf_{u\in\HmuG}E(u,\G),
	\end{equation}
and introduce the 	ambient space
	\begin{equation}
		\HmuG:=\{\,u\in H^1(\G)\,:\,\uLtwo^2=\mu\,\}
	\end{equation}
We call {\em ground state at mass $\mu$} or, for short, {\em ground state}, every minimizer of \eqref{energy} among all functions sharing the same mass $\mu$.

First of all, it is well-known \cite{cazenave, lions,zakharov}, that in the case of the real line, and provided that $2 < p < 6$, the compromise between kinetic and nonlinear term that gives rise to a ground state is realized for every $\mu$ by the {\em soliton}
\begin{equation}
\label{soliton}
\phi_\mu (x) \ = \ \mu^\alpha \phi_1 (\mu^\beta x), \qquad \alpha:=\frac{2}{p-2}, \,\,\, \beta: =\frac{p-2}{6-p}
,
\end{equation}
where the prototype soliton is denoted by $\phi_1$ and equals
$$ \phi_1 (x) : = C{\rm{sech}}(cx)\,.$$

\noindent In the case of a real half-line $\rr^+$, by elementary symmetry arguments one can immediately realize that a solution exists for every value of the mass $\mu$ and it coincides with a half-soliton with the maximum at the origin, possibly multiplied by a phase factor.

Despite the result for the half-line and for the line (i.e. a pair of half-lines), for the graph made of three half-lines meeting one another at a single vertex (i.e. a {\em star graph}) it has been proven that there is no ground state, irrespectively of the choice of $\mu$ (\cite{acfn12}). Starting from this negative result, the problem of ensuring (or excluding) the existence of ground states for the NLS on graphs gained some popularity in the community, and some general results were found, isolating a key topological condition (\cite{ast15}), studying in detail particular cases (\cite{DT-p,mp,nps}), dealing with compact graphs (\cite{CDS,dovetta}), introducing concentrated nonlinearities (\cite{DT,serratentarellly,st2,T}), focusing on the more challenging $L^2$-critical case (i.e. $p = 6$ \cite{ast17}). More recently, also some pioneering investigations of nonlinear Dirac equations has been initiated (\cite{BCT-p,BCT-19}).

\begin{figure}
	\centering
	\begin{tikzpicture}[xscale= 0.5,yscale=0.5]
	\draw[step=2,thin] (0,0) grid (6,6);
	\foreach \x in {0,2,...,6} \foreach \y in {0,2,...,6} \node at (\x,\y) [nodo] {};
	\foreach \x in {0,2,...,6}
	{\draw[dashed,thin] (\x,6.2)--(\x,7.2) (\x,-0.2)--(\x,-1.2) (-1.2,\x)--(-0.2,\x)  (6.2,\x)--(7.2,\x); }
	\end{tikzpicture}
	\caption{the two--dimensional square grid.}
	\label{grid}
\end{figure}
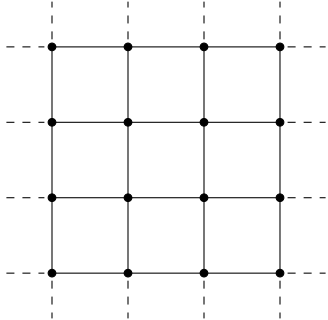

The analysis of NLS equations on periodic graphs has been developed for instance in (\cite{GPS,pankov,pelinovsky}), and a systematic discussion of the problem of ground states for periodic graphs has been carried out in \cite{dovetta-per}, however here we shall focus on a particular phenomenon highlited in \cite{adst} and called {\em dimensional crossover}. Investigating the problem of proving the existence or the nonexistence of ground states for the NLS on the regular two--dimensional square grid (see Figure \ref{grid}), it was found that three different regimes come into play:
\begin{enumerate}
\item if $2 < p < 4$, then a ground state exists for every value $\mu$ of the mass;
\item if $p > 6$, then there is no ground state
irrespectively of the value chosen for the mass;
\item if $p=6$, then there is a particular value of the mass, called {\em critical mass} and denoted by $\mu^*$, such that the infimum of the energy passes from $0$ to $-\infty$ as the mass exceeds $\mu^*$, and ground states never exist for any value of the mass;
\item if $4 \leq p < 6$, then there is a particular value of the mass, $\mu_p$, such that ground states exist only beyond $\mu_p$.
\end{enumerate}
Now, points 1 and 2 are common to what one finds in the problem of the ground states in $\R$ and $\R^2$. The transition of the actual value of the infimum of the energy as in Point 3 is characteristic of one-dimensional domains, in particular of quantum graphs made of a compact core and a certain number of half--lines. 

What really distinguishes the case of the grid graph from the previously studied cases of quantum graphs is point 4, where an unprecedented behaviour is detected for nonlinearity powers ranging from $4$ to $6$. Here power $4$ is meaningful since it is the critical power for two-dimensional problems! Then, the fact that power $4$ corresponds to a transition in the beaviour of the problem reveals that the two-dimensional structure is emerging.

Qualitatively, the grid is two-dimensional on a large scale, and this fact must emerge when searching for low-mass ground states, since low-mass means widespread functions.

From a quantitative point of view, the emergence of the two-dimensional large scale structure occurs in the validity of the {\em two-dimensional Sobolev inequality}, i.e.
\begin{equation}
\label{2sobolev} 
\| u \|_{L^2 (\G)} \ \leq \ C \| u' \|_{L^1 (\G)} \qquad(u\in W^{1,1}(\G)).
\end{equation}
As well-known in Functional Analysis, such an inequality is typical

 of two-dimensional domains, whereas in one-dimension one has the {\em one-dimensional Sobolev inequality}
\begin{equation}
\label{1sobolev} 
\| u \|_{L^\infty (\G)} \ \leq \ C \| u' \|_{L^1 (\G)}\qquad (u\in W^{1,1}(\G)).
\end{equation}
Now, inequality \eqref{1sobolev} is easy to prove for every one-dimensional non-compact graph, just using 
$$ u (x) = \int_\gamma u'(t) \, dt $$
where $x$ is any point of the graph and the symbol $\gamma$ denotes a path isomorphic to a half-line starting at $x$. The existence of such a path is ensured by the fact that the graph is non-compact (therefore it extends up to infinity) and connected (so that it is possible to reach the infinity from $x$ through a sequence of adjacent edges).

It is then clear that what marks the transition between the one and the two-dimensional regime is the coexistence of estimates \eqref{1sobolev} and \eqref{2sobolev}, so that what really characterizes the grid, as well as every structure dysplaying a two-dimensional nature in the large scale, is the validity of \eqref{2sobolev}.

As one shall expect, such a portrait can be generalized to the setting of periodic graphs exploiting higher dimensional structures in the large scale, like regular $n$--dimensional grids. In this context, it is readily seen that the dimensional crossover takes place between the one--dimensional and the $n$--dimensional critical power (see \cite{ad} for the explicit discussion of the case $n=3$).

\begin{figure}
	\centering
	\includegraphics[width=0.5\textwidth]{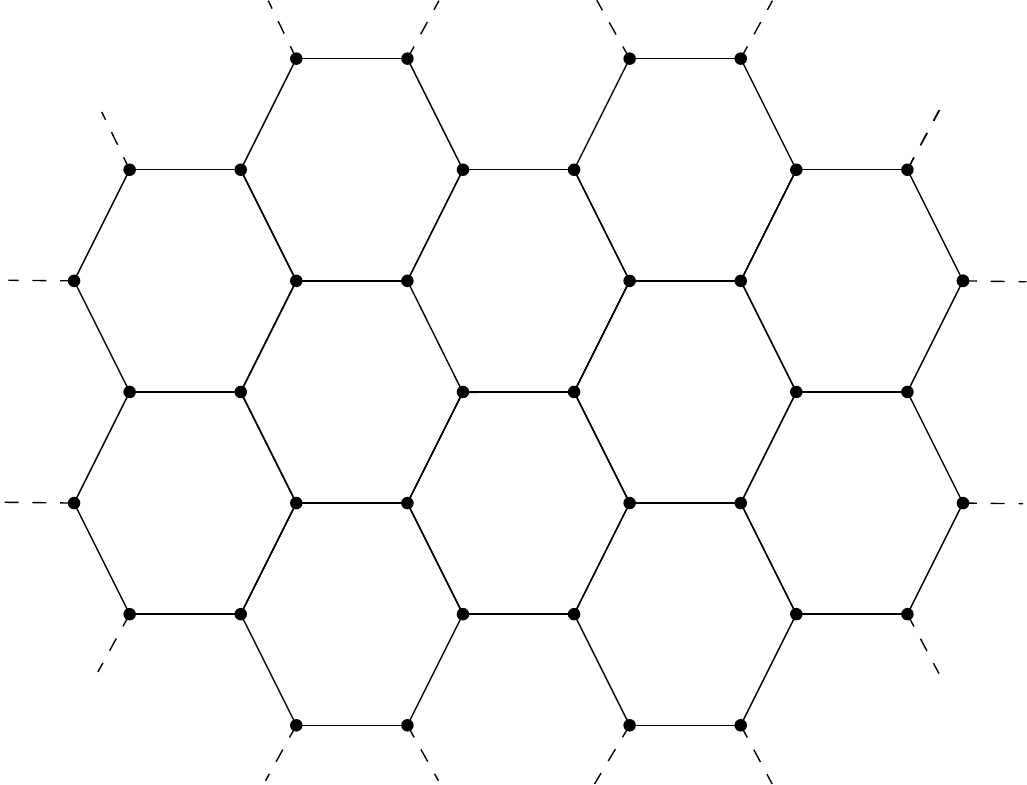}
	\caption{The infinite two-dimensional hexagonal grid $\G$.}
	\label{fig-grid}
\end{figure}

In this paper we show that for the {\em honeycomb graph}, namely the grid made of the periodic repetition of a hexagon along a two-dimensional mesh (see Figure \ref{fig-grid}), estimate \eqref{2sobolev} holds true. Moving from this fact, we deduce a complete result about the existence or nonexistence of ground states, closely following the steps intoduced in \cite{adst}.

\subsection{Existence of ground states in the honeycomb: the complete result}
According to the roadmap established in \cite{adst}, the validity of a Sobolev inequality 
results in the validity of a  corresponding family of Gagliardo-Nirenberg inequalities. Namely, from
\eqref{1sobolev} one obtains the {\em 1-dimensional Gagliardo-Nirenberg inequalities} that provide the following estimate of the potential term in \eqref{energy}:
\begin{equation}
\label{1gn}
\| u \|_{L^p (\G)}^p \ \leq \ C \| u' \|_{L^2(\G)}^{\f p 2 - 1} \| u \|_{L^2 (\G)}^{\f p 2 + 1},
\end{equation}
that, inserted in \eqref{energy}, gives
\begin{equation}
\label{subcritical}
E (u, \G) \ \geq \ \f 1 2 \| u' \|_{L^2 (\G)}^2 - \f C p \| u' \|_{L^2 (\G)}^{ \f p 2 - 1} \mu^{\f p 4 + \f 1 2}
\end{equation}
from which one immediately concludes that, if $2 < p < 6$, then
$$\mathcal{E}(\mu) > - \infty,$$
opening the possibility of existence of a ground state. In order to conclude for the existence, one should then consider the behaviour of minimizing sequences. By periodicity, the translation invariance of the problem excludes immediately escaping to infinity, so that the only possibility for a sequence not to converge is to spread along the grid, reaching in the limit zero energy. As a consequence, if there exists a function with negative energy, then minimizing sequences must converge and therefore a ground state exists.

The existence of a function with negative energy in the cases $2 < p <4 $ for every $\mu$, and $4 \leq p < 6$ for $\mu$ large enough, is the content of Theorem \ref{THM}  and of the positive part of point $(i)$ in Theorem \ref{THM 2}.

Conversely, to get to the core of our non-existence results, let us consider inequality \eqref{1gn} and notice that for $p=6$ it specializes to
\begin{equation}
\label{1gn6}
\| u \|_{L^6 (\G)}^6 \ \leq \ C \| u' \|_{L^2(\G)}^{2} \| u \|_{L^2 (\G)}^{4}.
\end{equation}
On the other hand, from \eqref{2sobolev} one derives
\begin{equation}
\label{2gn}
\| u \|_{L^p (\G)}^p \ \leq \ C \| u' \|_{L^2 (\G)}^{p - 2} \| u \|_{L^2 (\G)}^2,
\end{equation}
that,
for $p=4$, gives
\begin{equation}
\label{2gn4}
\| u \|_{L^4 (\G)}^4 \ \leq \ C \| u' \|_{L^2 (\G)}^{2} \| u \|_{L^2 (\G)}^2.
\end{equation}
Now, interpolating between \eqref{1gn6} and \eqref{2gn4} one has, for every $p \in [4,6]$
\begin{equation}
\label{interpolate}
\| u \|_{L^p (\G)}^p \ \leq \ C \| u' \| _{L^2 (\G)}^{2} \| u \|_{L^2 (\G)}^{p-2}.
\end{equation}
Then, by \eqref{interpolate}
\begin{equation}
\label{energybelow}
\begin{split}
E (u, \G) \ \geq \  & \f 1 2 \| u' \|_{L^2 (\G)}^2 - \f C p \| u' \|_{L^2 (\G)}^2 \| u \|_{L^2 (\G)}^{p-2} \\
\ = \ & \f 1 2 \| u' \|_{L^2 (\G)}^2 \left(1 - \f {2C} p \mu^{\f p 2 - 1} 
\right) \end{split}
\end{equation}
Then, for every $p \in [4,6]$ there exists a positive value $\mu_p > 0$ given by 

\[
\mu_p:=\Big(\f p {2C}\Big)^{\f 2 {p-2}}\,,
\]

\noindent with $C$ being the sharpest constant in \eqref{interpolate}, such that
\begin{itemize}
\item If $\mu < \mu_p$, then $E (u, \G) > 0$ for every $u \in H^1_\mu (\G)$. Since, by spreading the function $u$ along the grid, one immediately gets $\mathcal{E}(\mu) = 0$, it turns out that the infimum is not attained and ground states do not exist.

\item If $\mu > \mu_p$ it turns out that $\mathcal{E}(\mu) < 0$, and possibly infinitely negative.

\end{itemize}
The dimensional crossover lies exactly in this continuous transition from the subcritical regime (where for every mass there is a ground state) to the supercritical, where there are values of the mass in correspondence of which the energy is not lower bounded. In standard cases, such a transition only occurs in correspondence of the unique critical case, that amounts to $6$ in dimension one, and to $4$ in dimension two. In the case of a doubly periodic graph as the honeycomb we consider here, this actually takes place for all the nonlinearities $p$ between $4$ and $6$, so that a continuum of critical exponents arises between the critical power of dimension $2$ and the one of dimension $1$.

Here are the complete results:

	\begin{thm}
		\label{THM}
		Let $2<p<4$. Then, for every $\mu>0$, there exists a ground state of mass $\mu$.
	\end{thm}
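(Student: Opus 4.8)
The plan is to follow the roadmap of \cite{adst}: first show $\elevel(\mu)>-\infty$ and $\elevel(\mu)<0$, and then show that minimizing sequences converge, up to the symmetries of $\G$, by ruling out spreading along the graph. Finiteness and coercivity come for free: since $2<p<6$, the one--dimensional Gagliardo--Nirenberg inequality \eqref{1gn} holds on $\G$ (as it does on every non--compact connected metric graph), and the resulting bound \eqref{subcritical} gives at once $\elevel(\mu)>-\infty$ and boundedness in $H^1(\G)$ of any minimizing sequence. So the two substantive points are (i) $\elevel(\mu)<0$ for every $\mu>0$, and (ii) precompactness of minimizing sequences; I expect (ii) to be where the real work lies.

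For (i) I would exhibit a competitor that exploits the two--dimensional volume growth of the honeycomb. Fix a vertex $o$, let $d(x):=\mathrm{dist}_\G(x,o)$ — a $1$--Lipschitz function, piecewise linear with unit slope on each edge — and recall that, $\G$ being a doubly--periodic planar lattice, $|B_L|\asymp L^2$ and $|B_{2L}\setminus B_L|\asymp L^2$ for the balls $B_L:=\{d\le L\}$. For $L>0$ put $v_L:=\psi_L\circ d$ with $\psi_L(s):=\min\{1,\max\{0,2-s/L\}\}$, so that $v_L\equiv1$ on $B_L$, $v_L$ decays to $0$ on $B_{2L}\setminus B_L$ with $|v_L'|\le1/L$, and $v_L\equiv0$ off $B_{2L}$; then $v_L$ is Lipschitz with compact support, hence $v_L\in H^1(\G)$, and
\[
\|v_L'\|_{L^2(\G)}^2\ \lesssim\ \frac{|B_{2L}\setminus B_L|}{L^2}\ \asymp\ 1,\qquad \|v_L\|_{L^2(\G)}^2\ \asymp\ \|v_L\|_{L^p(\G)}^p\ \asymp\ |B_L|\ \asymp\ L^2 .
\]
Normalising, $u_L:=\sqrt{\mu}\,v_L/\|v_L\|_{L^2(\G)}\in\HmuG$ and, plugging the above into \eqref{energy},
\[
E(u_L,\G)\ \le\ \frac{c_1\mu}{L^2}\ -\ \frac{c_2\,\mu^{p/2}}{p}\,L^{\,2-p},\qquad c_1,c_2>0\ \text{depending only on }\G .
\]
Because $2<p<4$ one has $-2<2-p<0$, so the negative term wins as $L\to+\infty$ and $E(u_L,\G)<0$ for $L$ large, for every fixed $\mu>0$; this gives $\elevel(\mu)<0$. (The same computation shows why, for $p=4$, one needs $\mu$ large, and why for $p>4$ it never produces a negative sign for $L$ large — the crossover appears right here, at the exponent $2-p$.)

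For (ii), take a minimizing sequence $(u_n)\subset\HmuG$, bounded in $H^1(\G)$, and tile $\G$ by translates $\{\K+z\}$ of a compact fundamental cell $\K$; set $m_n:=\sup_z\|u_n\|_{L^\infty(\K+z)}$. If $m_n\to0$, then summing $\|u_n\|_{L^p(\K+z)}^p\le\|u_n\|_{L^\infty(\K+z)}^{p-2}\|u_n\|_{L^2(\K+z)}^2$ over $z$ gives $\|u_n\|_{L^p(\G)}^p\le m_n^{p-2}\mu\to0$ (using $p>2$), whence $\liminf_nE(u_n,\G)\ge0$, contradicting $\elevel(\mu)<0$; so this spreading alternative is excluded. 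Thus, along a subsequence, $m_n\ge\delta>0$; choosing periods $z_n$ with $\|u_n\|_{L^\infty(\K+z_n)}\ge\delta/2$ and replacing $u_n$ by the translate $u_n(\cdot-z_n)$ — legitimate since $z_n$ is a period of $\G$ — I may assume $u_n\rightharpoonup w$ in $H^1(\G)$, with $\|w\|_{L^\infty(\K)}\ge\delta/2$ by the compact embedding $H^1(\K)\hookrightarrow C(\K)$, so $w\neq0$ and $m:=\|w\|_{L^2(\G)}^2\in(0,\mu]$. A Brezis--Lieb splitting then yields $E(u_n,\G)=E(w,\G)+E(u_n-w,\G)+o(1)$ with $\|u_n-w\|_{L^2(\G)}^2\to\mu-m$, so that $\elevel(\mu)\ge\elevel(m)+\elevel(\mu-m)$; together with the strict subadditivity $\elevel(\mu)<\elevel(m)+\elevel(\mu-m)$ for $0<m<\mu$ — which holds because $\elevel(\mu)<0$, exactly as in \cite{adst} — this forces $m=\mu$. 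Hence all the inequalities are equalities, $u_n\to w$ strongly in $H^1(\G)$, and $w$ is a ground state of mass $\mu$.

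The routine ingredients are the coercivity \eqref{subcritical}, the volume bounds $|B_L|\asymp L^2$, and the Brezis--Lieb bookkeeping. The hard part will be step (ii): treating the dichotomy — discarding the spreading scenario and, once the sequence has been re--centred via a period of $\G$, proving the strict subadditivity that prevents the weak limit from carrying only part of the mass. It is precisely there that both the periodicity of $\G$ and the strict bound $\elevel(\mu)<0$ are genuinely used.
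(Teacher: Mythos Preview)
Your argument is correct and follows the same two--step scheme as the paper --- show $\elevel(\mu)<0$ by an explicit competitor, then invoke compactness --- but the implementations differ in an interesting way. For step~(i) the paper builds $u_\varepsilon(x)=e^{-\varepsilon(|x|+|i|)}$ on the path decomposition $\{L_i\}\cup\{\overline{L}_i\cap\G\}$ introduced in Section~\ref{sec:notation}, and computes all norms explicitly, arriving at $E(v_\varepsilon,\G)\sim\frac12\mu\varepsilon^2-\frac{C}{p}\mu^{p/2}\varepsilon^{p-2}$. Your competitor $v_L=\psi_L\circ d$ uses only the graph distance and the quadratic volume growth $|B_L|\asymp L^2$, and delivers the same scaling with $\varepsilon\sim 1/L$. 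Your route is more robust --- it works verbatim on any doubly--periodic planar graph and bypasses the honeycomb--specific bookkeeping of Section~\ref{sec:notation} --- while the paper's route gives sharper constants and ties in with the same path structure already needed for the Sobolev inequality. For step~(ii) the paper simply quotes Proposition~3.3 of \cite{adst} (valid for all doubly--periodic graphs), whereas you re--derive it via the standard concentration--compactness / Brezis--Lieb / strict subadditivity chain; your sketch is accurate, though note that the strict subadditivity $\elevel(\mu)<\elevel(m)+\elevel(\mu-m)$ is obtained from the amplitude scaling $u\mapsto\sqrt{\theta}u$ together with $\elevel(\cdot)<0$, which you have already established for \emph{every} mass in this regime.
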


	\begin{thm}
		\label{THM 2}
		For every $p\in[4,6]$ there exists a critical mass $\mu_p>0$ such that
		
		\begin{itemize}
			\item[\textit{(i)}] if $p\in(4,6)$ then ground states of mass $\mu$ exist if and only if $\mu\geq\mu_p$, and
			\begin{equation}
			\label{inf p46}
			\elevel(\G)\begin{cases}
			=0 & \text{if }\mu\leq\mu_p\\
			<0 & \text{if }\mu>\mu_p\,.
			\end{cases}
			\end{equation}
			\item[\textit{(ii)}] if $p=4$ then ground states of mass $\mu$ exist if $\mu>\mu_4$ and they do not exist if $\mu>\mu_4$. Furthermore, \eqref{inf p46} holds true also in the case $p=4$.
			\item[\textit{(iii)}] if $p=6$ then ground states never exist, independently of the value of $\mu$, and
			\begin{equation}
				\label{inf p6}
				\elevel(\mu)=\begin{cases}
				0 & \text{if }\mu\leq\mu_6\\
				-\infty & \text{if }\mu>\mu_6\,.
				\end{cases}
			\end{equation}
		\end{itemize}
	\end{thm}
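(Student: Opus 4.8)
The plan is to combine the Gagliardo--Nirenberg inequalities \eqref{1gn}--\eqref{interpolate} and the energy bound \eqref{energybelow} with the compactness alternative for minimizing sequences recalled above (on a doubly periodic graph such a sequence can neither run to infinity nor spread out without its energy going to $0$), together with two elementary constructions. \textbf{Step~1 (the regime $\mu\le\mu_p$, nonexistence).} Fix $p\in[4,6]$. If $u\in\HmuG$ and $\mu\le\mu_p$, then \eqref{energybelow} gives $E(u,\G)\ge\tfrac12\,\udot^2\,(1-\tfrac{2C}{p}\mu^{p/2-1})\ge0$, strictly positive if $\mu<\mu_p$ since $u\not\equiv0$; hence $\elevel(\mu)\ge0$. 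For the reverse bound one spreads the mass: since a metric ball of radius $L$ in the honeycomb carries total length of order $L^2$ -- the two--dimensional feature behind \eqref{2sobolev} -- the plateau functions equal to a constant $c_L$ on such a ball, decaying affinely to $0$ on the adjacent shell, vanishing outside and normalized to mass $\mu$, force $c_L\asymp L^{-1}$, so their kinetic and potential terms vanish and $E\to0^+$. Thus $\elevel(\mu)=0$ for every $\mu\le\mu_p$, with the infimum not attained when $\mu<\mu_p$ (every competitor has strictly positive energy). This yields the nonexistence parts of \textit{(i)} and \textit{(ii)}, a part of \textit{(iii)}, and the first branches of \eqref{inf p46} and \eqref{inf p6}.

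\textbf{Step~2 (negative energy for $\mu>\mu_p$, hence existence).} Let $C$ be the sharp constant in \eqref{interpolate}: it is finite and positive, so $\mu_p\in(0,\infty)$. For small $\varepsilon>0$ pick $u\in H^1(\G)\setminus\{0\}$ with $\uLp^p\ge(C-\varepsilon)\,\udot^2\,\uLtwo^{p-2}$ and rescale its amplitude so that $\uLtwo^2=\mu$ -- which does not change the Gagliardo--Nirenberg quotient. Substituting into \eqref{energy} gives $E(u,\G)\le\tfrac12\,\udot^2\,(1-\tfrac{2(C-\varepsilon)}{p}\mu^{p/2-1})$, negative for $\varepsilon$ small because $\mu^{p/2-1}>\mu_p^{p/2-1}=\tfrac{p}{2C}$; hence $\elevel(\mu)<0$. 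For $p\in[4,6)$ the right--hand side of \eqref{subcritical} is bounded below (as $p/2-1<2$), so $\elevel(\mu)>-\infty$, which is the second branch of \eqref{inf p46}. The compactness alternative now forces any minimizing sequence to converge (up to lattice translations, by periodicity) to a ground state of mass $\mu$. Together with Theorem~\ref{THM} for $2<p<4$, this gives the existence part of \textit{(i)} for $\mu>\mu_p$ and of \textit{(ii)} for $\mu>\mu_4$.

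\textbf{Step~3 (the threshold $\mu=\mu_p$, $p\in(4,6)$).} Here $\elevel(\mu_p)=0$ by Step~1, and one must show it is attained; equivalently, that the sharp constant $C$ in \eqref{interpolate} is achieved, an optimizer rescaled to mass $\mu_p$ being then a ground state of energy $0$. For $p\in(4,6)$ the quotient in \eqref{interpolate} tends to $0$ both when $\udot\to\infty$ (by \eqref{1gn}, since $p<6$) and when $\udot\to0$ (by \eqref{2gn}, since $p>4$), so a maximizing sequence, normalized in $L^2(\G)$ and recentred by a lattice vector, is bounded in $H^1(\G)$ with $\udot$ bounded away from $0$. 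Vanishing is excluded because it would drive $\uLp\to0$ and hence the quotient to $0$; splitting of the mass at infinite distance is excluded because \eqref{interpolate} applied to two pieces of masses $a$ and $1-a$ bounds the quotient by $C\max(a,1-a)^{p/2-1}<C$; the weak $H^1_{\mathrm{loc}}$ limit is then an optimizer. (Alternatively one may pass to the limit along the ground states at masses $\mu_n\downarrow\mu_p$, bounded in $H^1(\G)$ by \eqref{subcritical}, ruling out vanishing via the stationary equation they satisfy on $\G$, whose negative Lagrange multiplier prevents an arbitrarily flat limit.) This concentration-compactness at the threshold mass, where the one-- and two--dimensional Gagliardo--Nirenberg estimates must be played against each other, is the step I expect to be the main obstacle.

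\textbf{Step~4 (the critical cases $p=6$ and $p=4$).} For $p=6$, Step~1 gives $\elevel(\mu)=0$ for $\mu\le\mu_6$, and the sharp constant in \eqref{1gn6} on $\G$ -- which equals the sharp constant on the line (a fact belonging to the metric-graph analysis of the honeycomb, so that $\mu_6=\mu^*_{\rr}$, the critical mass of $\rr$), is approached by solitons squeezed inside a single edge, yet is not attained on $\G$ -- shows that the infimum is not attained for $\mu\le\mu_6$ either. For $\mu>\mu_6$ one takes a compactly supported $w$ on $\rr$ of mass $\mu$ with $E_{\rr}(w)<0$, realizes it inside one edge of $\G$ and dilates it there: by $L^2$--criticality the energy scales as $\lambda^2E_{\rr}(w)\to-\infty$, whence $\elevel(\mu)=-\infty$. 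Hence ground states never exist for $p=6$, proving \textit{(iii)} and \eqref{inf p6}. Finally $p=4$ is the two--dimensional critical analogue: Steps~1--2 give $\elevel(\mu)=0$ with no ground state for $\mu<\mu_4$ and a finite, negative ground-state energy -- hence a ground state -- for $\mu>\mu_4$, which is \eqref{inf p46} for $p=4$; at $\mu=\mu_4$ the scale-invariant quotient \eqref{2gn4} is not attained on $\G$, so no ground state exists there, and the masses admitting a ground state form exactly the interval $(\mu_4,\infty)$, completing \textit{(ii)}.
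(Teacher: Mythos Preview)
The paper does not actually give a self--contained proof of Theorem~\ref{THM 2}: after establishing the two--dimensional Sobolev inequality (Theorem~\ref{thm-sob}) and deriving the interpolated Gagliardo--Nirenberg inequality, it explicitly states that ``Theorems~1.1 and~1.2 do not differ from their analogues in the case of the square grid, treated in \cite{adst}'' and refers the reader there. Your outline follows precisely the strategy sketched in the paper's introduction (the lower bound \eqref{energybelow}, the spreading construction for $\elevel(\mu)\le0$, near--optimizers of \eqref{interpolate} to force $\elevel(\mu)<0$ for $\mu>\mu_p$, and the compactness alternative of Proposition~\ref{prop_comp}), so at the level of architecture your proof coincides with what the paper has in mind.

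Two places in your write--up would need more than you give, and are worth flagging since the paper is silent on them too. First, in Step~3 your concentration--compactness argument for attainment of the sharp constant in \eqref{interpolate} at $p\in(4,6)$ is the right shape, but on a metric graph the exclusion of dichotomy is not automatic: one cannot simply cut a function into two far--apart pieces without creating boundary contributions, and the strict--subadditivity bound $C\max(a,1-a)^{(p-2)/2}<C$ needs to be paired with a Brezis--Lieb or profile--decomposition statement adapted to the periodic graph. Your alternative route via limits of ground states at $\mu_n\downarrow\mu_p$ is in fact closer to how \cite{adst} handles it. Second, in Step~4 the assertions that the sharp one--dimensional constant in \eqref{1gn6} on $\G$ coincides with that of $\rr$ and is \emph{not} attained, and the analogous non--attainment claim for \eqref{2gn4} at $p=4$, are correct but not free: the first uses the symmetric rearrangement on graphs (the paper cites \cite{ast-jfa}) together with the fact that the real--line soliton cannot satisfy the Kirchhoff conditions at the vertices of $\G$; the second requires an argument specific to the two--dimensional endpoint. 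These are exactly the ingredients that \cite{adst} supplies, so your proposal is consistent with the paper's deferred proof, but be aware that the threshold statements carry genuine content beyond the inequalities themselves.
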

	
\noindent Theorems 1.1 and 1.2 do not differ from their analogues in the case of the square grid, treated in \cite{adst}. The only remarkable new procedures concern the proof of Sobolev inequality as in Theorem \ref{thm-sob} and the construction of a function with negative energy proving the existence of a ground state in the regime $p\in(2,4)$.

\medskip
The remainder of the paper is organised as follows. Section \ref{sec:notation} sets some notation for the honeycomb, whereas Section \ref{sec:sobolev} develops the proof of Sobolev inequality \eqref{1sobolev}. Finally, within Section \ref{sec:competitor} we exhibit functions realizing strictly negative energy when $p\in(2,4)$, giving the proof of Theorem \ref{THM}.

	\section{Notation}
	\label{sec:notation}
	
	\begin{figure}
		\centering
		\subfloat[][]{
		\includegraphics[width=0.4\textwidth]{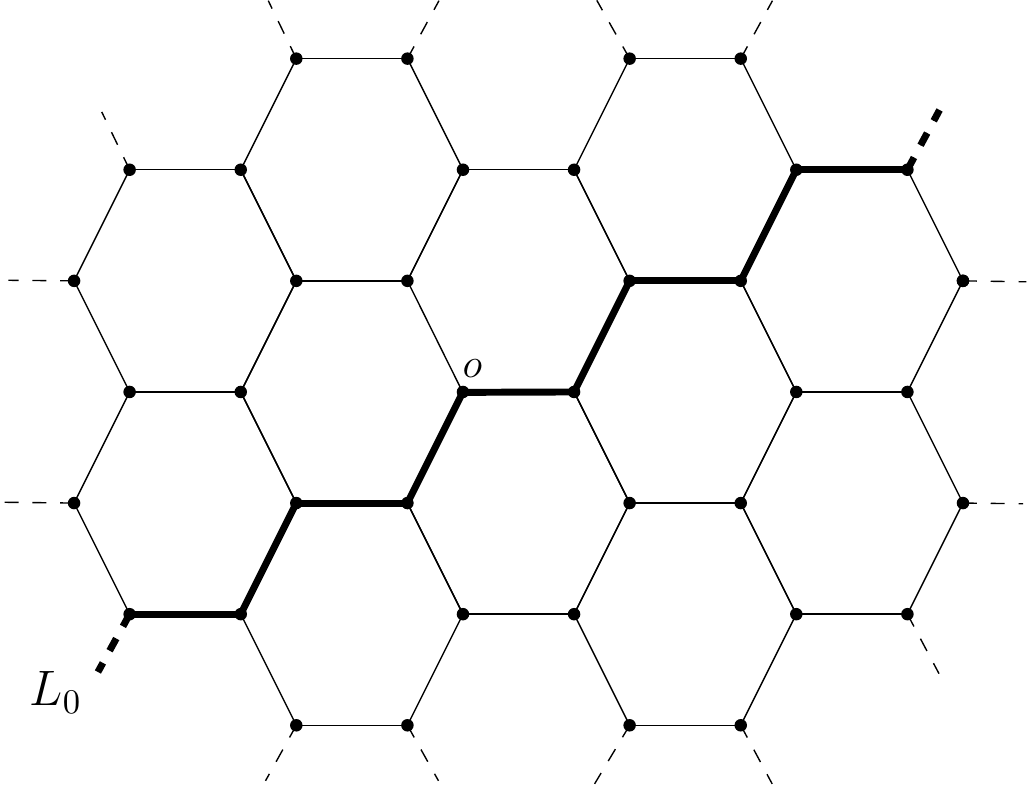}}\qquad
		\subfloat[][]{
		\includegraphics[width=0.4\textwidth]{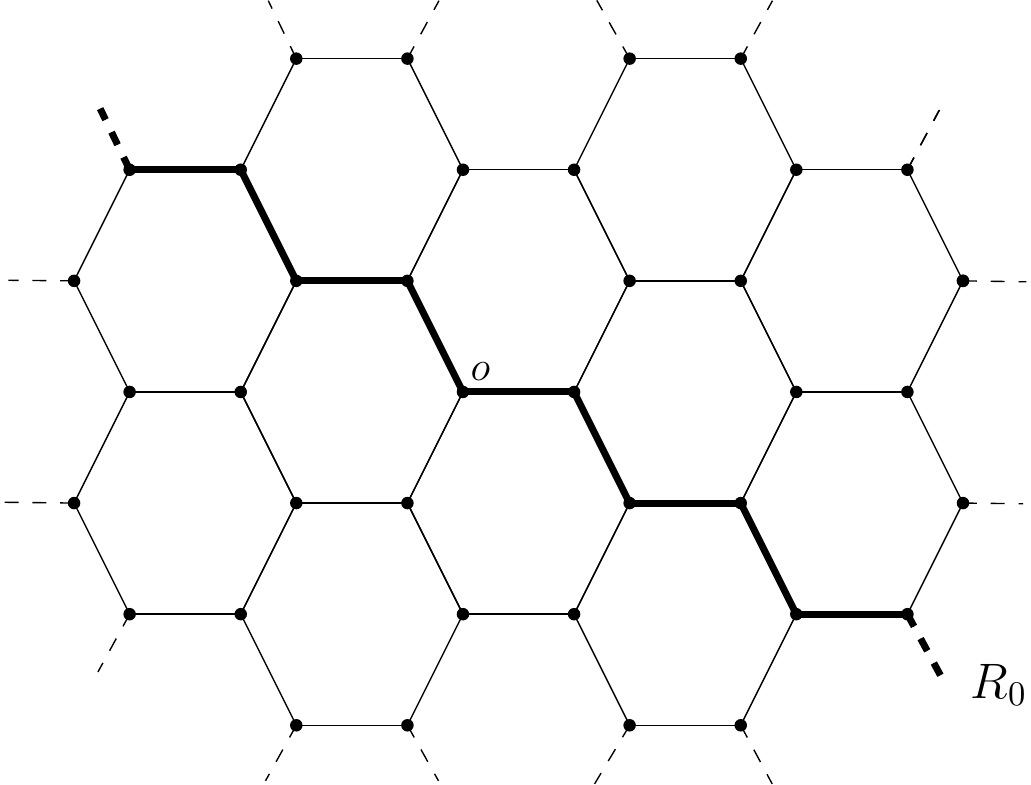}}
		\caption{the paths $L_0$ (a) and $R_0$ (b).}
		\label{fig-paths}
	\end{figure}

	Before going further, a bit of notation is necessary. Particularly, to ease several of the upcoming arguments, it is useful to decompose the exagonal grid in two family of parallel infinite paths, so that the whole graph $\G$ can be described as their union.
	
	To this purpose, let us introduce the following construction. Fix any cell in $\G$ and denote by $o$ its lower left vertex. Note that, starting at $o$, there is one horizontal edge on the right and both one up-directed and one down-directed edge on the left.  Consider then the infinite path running through $o$ built up this way. First, moving from $o$ to the right, follow the infinite path that alternates a horizontal and an up-directed edge. Then, moving from $o$ to the left, follow the infinite path that alternates a down-directed and a horizontal edge. We denote by $L_0$ the union of these two paths (see Figure \ref{fig-paths}(a)).
	
	Similarly, consider both the infinite path that goes from $o$ to the left alternating an up-directed and a horizontal edge, and the one that originates at $o$ and moves to the right alternating a horizontal and a down-directed edge. We denote the union of these two by $R_0$ (see Figure \ref{fig-paths}(b)).
	
	Note that both on $L_0$ and on $R_0$ natural coordinates $x_{L_0}:L_0\to(-\infty,+\infty),\,x_{R_0}:R_0\to(-\infty,+\infty)$ can be defined, so that they can be identified with real lines (with the origin corresponding to $o$). 
	
	Now, consider for instance the vertex belonging to $L_0$ which is at distance $2$ from $o$ on its right. It is immediate to see that an infinite path running through this vertex and parallel to $R_0$ can be recover simply repeating the procedure used to construct $R_0$. However, this is not the case if we consider the vertex of $L_0$ at distance $1$ from $o$ on its right, as it already belongs to $R_0$.
	
	More generally, through every vertex on $L_0$ located at an even distance from $o$ on its right runs an infinite path parallel to $R_0$. It is then straightforward to check that the same holds true also for every vertex on $L_0$ located at an odd distance from $o$ on its left (whereas vertices at even distances on the left do not provide any additional path). This leads to a family $\{R_j\}_{j\in\mathbb{Z}}$ of infinite parallel paths in $\G$.
	
	Analogously, one can consider the family of infinite paths $\{L_i\}_{i\in\mathbb{Z}}$ all parallel to $L_0$, which arises taking any vertex on $R_0$ either at an even distance from $o$ on its right or at odd distance from $o$ on its left and repeating the steps in the construction of $L_0$. 
	
	We stress the fact that the set defined by \( \Big(\bigcup_{i\in\mathbb{Z}}L_i\Big)\cap\Big(\bigcup_{j\in\mathbb{Z}}R_j\Big)\, \) is composed by all the horizontal edges of $\mathcal{G}$ and for this reason it follows
	
	\[
	\mathcal{G}\subset\Big(\bigcup_{i\in\mathbb{Z}}L_i\Big)\cup\Big(\bigcup_{j\in\mathbb{Z}}R_j\Big)\,.
	\]
	
	\noindent In particular $L_i\cap R_j\neq\emptyset$ for every $i,j\in\mathbb{Z}$, as they share exactly one horizontal edge. 
	
	Finally, given $i,j\in\mathbb{Z}$, we denote by $I_i ^j\subset L_i$  the union of the horizontal edge that $L_i$ shares with $R_j$ and the up-directed edge on its right. Moreover, we set $v_i ^j$ the first vertex of $I_i ^j$ that we meet walking down $R_j$ from $-\infty$ (see Figure \ref{fig-I}(a)). Note that, for every $i$, $L_i= \bigcup_{j\in\mathbb{Z}}I_i ^j$. Similarly, we define $J_j ^i$ as the union of the horizontal edge shared by $L_i$ and $R_j$ and the up-directed edge on its left. As before, we observe that, for every $j \in \mathbb{Z}$, $R_j=\bigcup_{i\in\mathbb{Z}}J_j ^i$ and again we denote by $w_j ^i$ the first vertex of $J_j ^i$ that we encounter walking down $L_i$ from $-\infty$ (Figure \ref{fig-I}(b)). 
	
	\begin{figure}
		\centering
		\subfloat[][]{
		\includegraphics[width=0.4\textwidth]{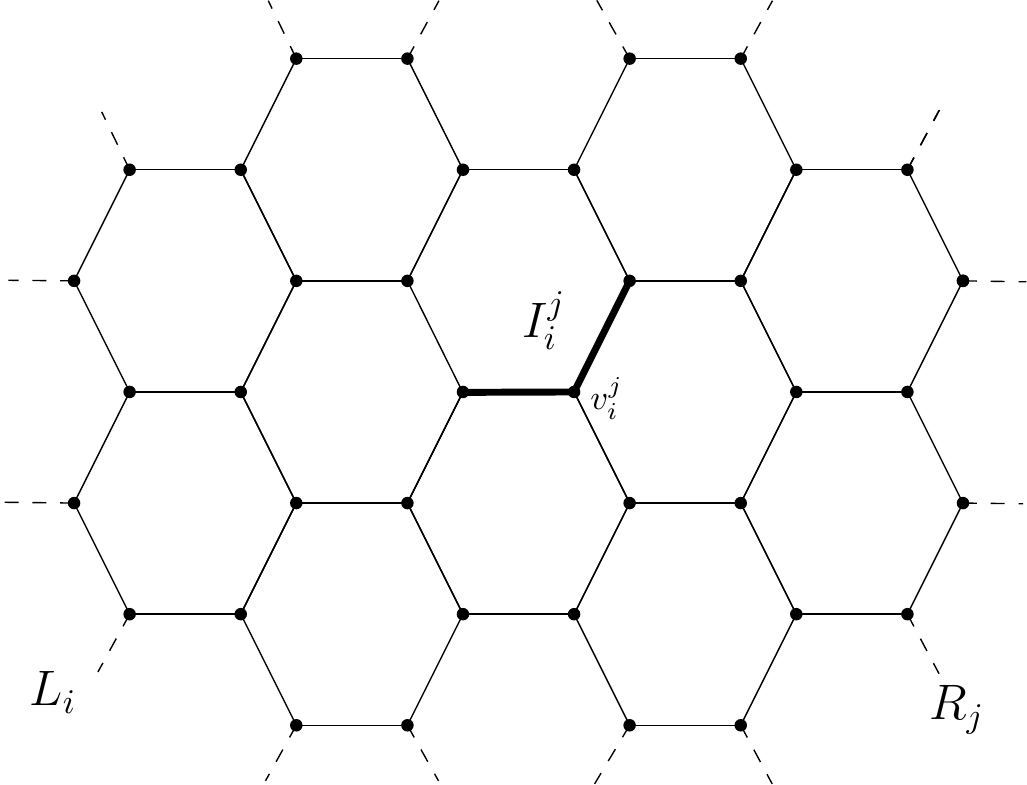}
		}\qquad
		\subfloat[][]{
		\includegraphics[width=0.4\textwidth]{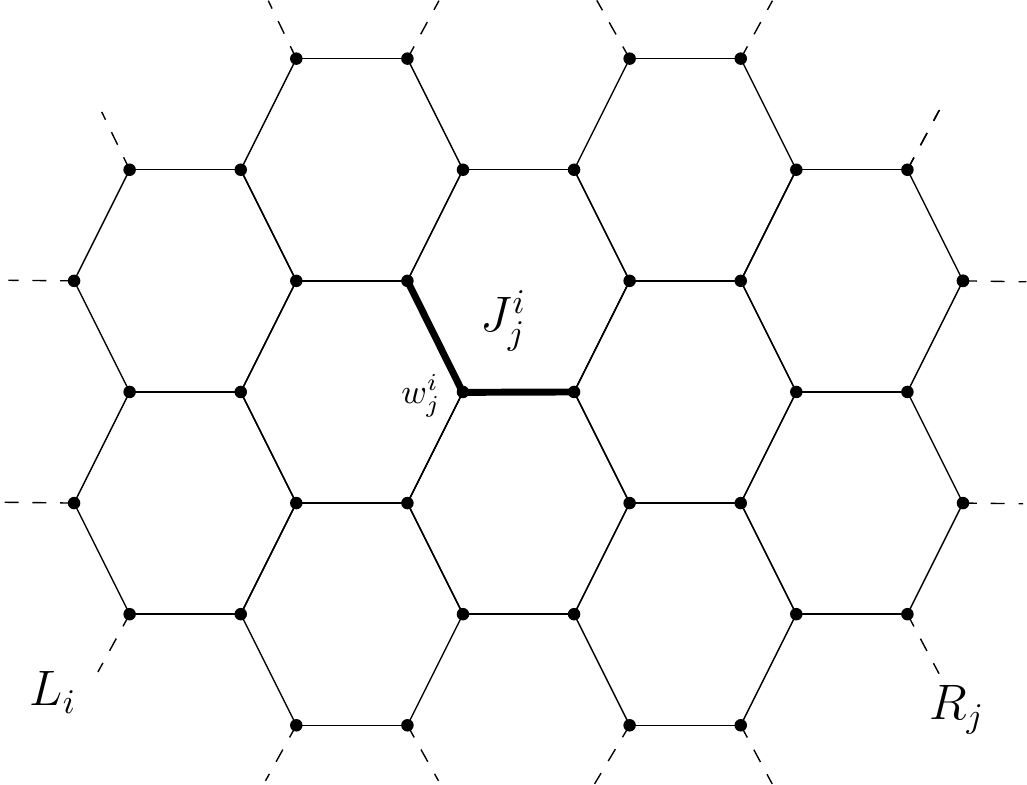}
		}
		\caption{The subsets $I_i^j$ (a) and $J_j^i$ (b).}
		\label{fig-I}
	\end{figure}

	\section{Sobolev inequality}
	\label{sec:sobolev}

This section is devoted to the derivation of some functional inequalities that are responsible for the grid $\G$ interpolating between one-dimensional and two-dimensional behaviours. Particularly, the two-dimensional nature of the graph shows up explicitly with the following result, stating the validity of the Sobolev inequality in the form typical of dimension two.
 	
\begin{thm} 
\label{thm-sob}
For every $u \in W^{1,1}(\mathcal{G})$,
\begin{equation}
\label{sobolev}
\|u\|_{L^2(\mathcal{G})} \leq 2\sqrt{2l} \|u'\|_{L^1(\mathcal{G})}.
\end{equation}
\end{thm}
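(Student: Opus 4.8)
The plan is to reproduce, at the level of the metric graph, the classical slicing proof of the two--dimensional Gagliardo--Nirenberg--Sobolev inequality, using the decomposition of $\G$ into the two families of lines $\{L_i\}_{i\in\mathbb Z}$ and $\{R_j\}_{j\in\mathbb Z}$ introduced in Section~\ref{sec:notation}. Fix $u\in W^{1,1}(\G)$. For every $i$ the restriction $u|_{L_i}$ is absolutely continuous on $L_i\simeq\mathbb R$ with $u|_{L_i},(u|_{L_i})'\in L^1(\mathbb R)$, hence it vanishes at both ends of $L_i$; integrating $u'$ along a half--line of $L_i$ issuing from $x\in L_i$ gives the pointwise bound
\[
|u(x)|\ \le\ A_i:=\int_{L_i}|u'|\,dx\qquad(x\in L_i),
\]
and symmetrically $|u(x)|\le B_j:=\int_{R_j}|u'|\,dx$ for $x\in R_j$. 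Since the $L_i$ (resp. the $R_j$) are pairwise disjoint and contained in $\G$, one has $\sum_i A_i=\int_{\cup_iL_i}|u'|\le\|u'\|_{L^1(\G)}$ and $\sum_j B_j\le\|u'\|_{L^1(\G)}$.

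The next step is to promote the vertical bound $|u|\le B_j$, which a priori holds only on the horizontal edges, to the whole of $L_i$. Recall $L_i=\bigcup_j I_i^j$, where $I_i^j$ is the union of the horizontal edge $h_{ij}:=L_i\cap R_j\subset R_j$ and the up--directed edge on its right, so $|I_i^j|=2l$. If $x\in h_{ij}$ then $|u(x)|\le B_j$; if $x$ lies on the slanted edge of $I_i^j$, walking down to the endpoint $p$ of $h_{ij}$ (which belongs to $R_j$) yields $|u(x)|\le|u(p)|+\int_{I_i^j}|u'|\le B_j+\int_{I_i^j}|u'|$. Combining with $|u(x)|\le A_i$ on $I_i^j\subset L_i$ we obtain $|u(x)|^2\le A_i\bigl(B_j+\int_{I_i^j}|u'|\bigr)$ there, and therefore
\[
\int_{I_i^j}|u|^2\ \le\ 2l\,A_i\Bigl(B_j+\int_{I_i^j}|u'|\Bigr).
\]
Summing over $j$ first (using $\sum_j\int_{I_i^j}|u'|=\int_{L_i}|u'|=A_i$) and then over $i$,
\[
\sum_i\int_{L_i}|u|^2\ \le\ 2l\Bigl[\Bigl(\sum_iA_i\Bigr)\Bigl(\sum_jB_j\Bigr)+\sum_iA_i^2\Bigr]\ \le\ 4l\,\|u'\|_{L^1(\G)}^2,
\]
where we used $\sum_iA_i^2\le(\sum_iA_i)^2\le\|u'\|_{L^1(\G)}^2$. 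Running the identical argument with the roles of the two families exchanged — now $R_j=\bigcup_iJ_j^i$ with $J_j^i$ the union of $h_{ij}$ and the up--directed edge on its left, whose lower endpoint lies on $L_i$ — gives $\sum_j\int_{R_j}|u|^2\le 4l\,\|u'\|_{L^1(\G)}^2$. Since $\G\subset(\bigcup_iL_i)\cup(\bigcup_jR_j)$, subadditivity of the integral finally yields
\[
\|u\|_{L^2(\G)}^2\ \le\ \sum_i\int_{L_i}|u|^2+\sum_j\int_{R_j}|u|^2\ \le\ 8l\,\|u'\|_{L^1(\G)}^2,
\]
which is \eqref{sobolev} after taking square roots.

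The routine ingredients are the absolute--continuity/decay argument on each line and the arithmetic leading to the constant $8l=(2\sqrt{2l})^2$. The one point that needs care — and the only place where the honeycomb geometry really enters — is the bookkeeping that neutralises the slanted edges: each slant is attached to a horizontal edge $h_{ij}$, so the vertical integral $B_j$ controlling $h_{ij}$ also controls the attached slant up to the local error $\int_{I_i^j}|u'|$, and it is precisely this error that telescopes, upon summation over $j$, back into $A_i$, keeping the double sum finite. Checking that this pairing is consistent and that every edge of $\G$ is accounted for through the covers $L_i=\bigcup_jI_i^j$ and $R_j=\bigcup_iJ_j^i$ (together with $\G\subset(\bigcup_iL_i)\cup(\bigcup_jR_j)$) is what distinguishes the argument from the purely one--dimensional reasoning behind \eqref{1sobolev}.
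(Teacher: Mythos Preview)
Your proof is correct and follows essentially the same route as the paper's: both establish, for $x\in I_i^j$, the pointwise bound $|u(x)|^2\le A_i\bigl(B_j+\int_{I_i^j}|u'|\bigr)$, integrate over $I_i^j$ of length $2l$, sum over $j$ (telescoping the local error into $A_i$) and then over $i$, and repeat symmetrically for the $R_j$'s. The only cosmetic difference is that the paper obtains the second factor by writing $u(x)$ as a single integral along the composite path $R_j(-\infty,v_i^j)\cup L_i(v_i^j,x)$, whereas you split into the horizontal and slanted cases; the resulting inequality and the constant $8l$ are identical.
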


\begin{proof}
We beforehand remind that $\mathcal{G}\subset\Big(\bigcup_{i\in\mathbb{Z}}L_i\Big)\cup\Big(\bigcup_{j\in\mathbb{Z}}R_j\Big)$, so that
\begin{equation}
\label{norm}
\|u\|^2 _{L^2(\mathcal{G})} \leq \sum_i \|u\|^2 _{L^2(L_i)} + \sum_j \|u\|^2 _{L^2(R_j)}. 
\end{equation} 
In order to prove (\ref{sobolev}), we aim to estimate the two terms on the right side of (\ref{norm}). Let us start with $\sum_i \|u\|^2 _{L^2(L_i)}$, where $\|u\|^2 _{L^2(L_i)} = \int_{L_i} |u(x)|^2 dx$.

Consider any point $x \in \mathcal{G}$ located on $L_i$. Observe that $x$ can be reached following at least two different paths on $\mathcal{G}$. The first one walks down $L_i$ from $-\infty$ to $x$, whereas the second one runs through $R_j$ from $-\infty$ to the vertex $v_i ^j$ and then moves on $L_i$ from $v_i ^j$ to $x$ (Figure \ref{fig-path}). Identifying with some abuse of notation the points $x$ and $v_i^j$ with their corresponding coordinates $x_{L_i}(x),\,x_{L_i}(v_i^j)$ and $x_{R_j}(v_i^j)$, we denote by $L_i(-\infty,x)$, $R_j(-\infty,v_i^j)$ and $L_i(v_i^j,x)$ the paths from $-\infty$ to $x$ along $L_i$, from $-\infty$ to $v_i^j$ along $R_j$ and from $v_i^j$ to $x$ along $L_i$, respectively.

Thus, we get 
\begin{equation}
\label{direct}
u(x)=\int_{L_i(-\infty,\,x)} u'(\tau) d\tau
\end{equation} 
and 
\begin{equation}
\label{indirect}
u(x)=\int_{R_j(-\infty,\,v_i ^j)} u'(\tau) d\tau + \int_{L_i(v_i ^j,\,x)} u'(\tau) d\tau.
\end{equation} 
Multiplying (\ref{direct}) and (\ref{indirect}) and using the fact that $L_i(-\infty,\,x)\subset L_i,\,R_j(-\infty,\,v_i^j)\subset R_j$ and $L_i(v_i^j,\,x)\subset I_i^j$, we estimate
\begin{align*}
|u(x)|^2 & = \bigg| \int_{L_i(-\infty,\,x)} u'(\tau) d\tau \bigg|\cdot \bigg| \int_{R_j(-\infty,\,v_i ^j)} u'(\tau) d\tau + \int_{L_i(v_i ^j,\,x)} u'(\tau) d\tau \bigg| \\ & \leq \bigg( \int_{L_i(-\infty,\,x)} |u'(\tau)| d\tau \bigg) \cdot \bigg( \int_{R_j(-\infty,\,v_i ^j)} |u'(\tau)| d\tau + \int_{L_i(v_i ^j,\,x)} |u'(\tau)| d\tau \bigg) \\ & \leq \bigg( \int_{L_i} |u'(\tau)| d\tau \bigg) \cdot \bigg( \int_{R_j} |u'(\tau)| d\tau + \int_{I_i ^j} |u'(\tau)| d\tau \bigg).
\end{align*}
Then, integrating over $L_i$ 
\begin{equation}
\label{stima}
\int_{L_i} |u(x)|^2 dx = \int_{L_i} |u'(\tau)| d\tau \bigg( \int_{L_i} \bigg( \int_{R_j} |u'(\tau)| d\tau + \int_{I_i ^j} |u'(\tau)| d\tau \bigg) dx \bigg).
\end{equation}
Recall that $L_i= \bigcup_{j \in \mathbb{Z}} I_i ^j$  and note that both $\int_{R_j} |u'(\tau)| d\tau$ and $\int_{I_i ^j} |u'(\tau)| d\tau$ are piecewise constant on each $I_i ^j$ as functions of $x$.  Hence, it results
\begin{equation}
\label{const1}
\int_{L_i} \bigg( \int_{R_j} |u'(\tau)| d\tau \bigg) dx = 2l \sum_{j \in \mathbb{Z}} \int_{R_j} |u'(\tau)| d\tau,
\end{equation}
and
\begin{equation}
\label{const2}
\int_{L_i} \bigg( \int_{I_i ^j} |u'(\tau)| d\tau \bigg) dx = 2l \sum_{j \in \mathbb{Z}} \int_{I_i ^j} |u'(\tau)| d\tau = 2l\int_{L_i} |u'(\tau)| d\tau.
\end{equation}
By (\ref{stima}), (\ref{const1}) and (\ref{const2}) it follows
\begin{align*}
\int_{L_i} |u(x)|^2 dx & = \int_{L_i} |u'(\tau)| d\tau \bigg( 2l \sum_{j \in \mathbb{Z}} \int_{R_j} |u'(\tau)| d\tau + 2l\int_{L_i} |u'(\tau)| d\tau \bigg) \\ & \leq 4l \| u' \| _{L^1(\mathcal{G})} \int_{L_i} |u'(\tau)| d\tau,
\end{align*}
as each term in the sum can be dominated by $\| u' \| _{L^1(\mathcal{G})}$.


\begin{figure}
	\centering
	\subfloat[][]{
		\includegraphics[width=0.4\textwidth]{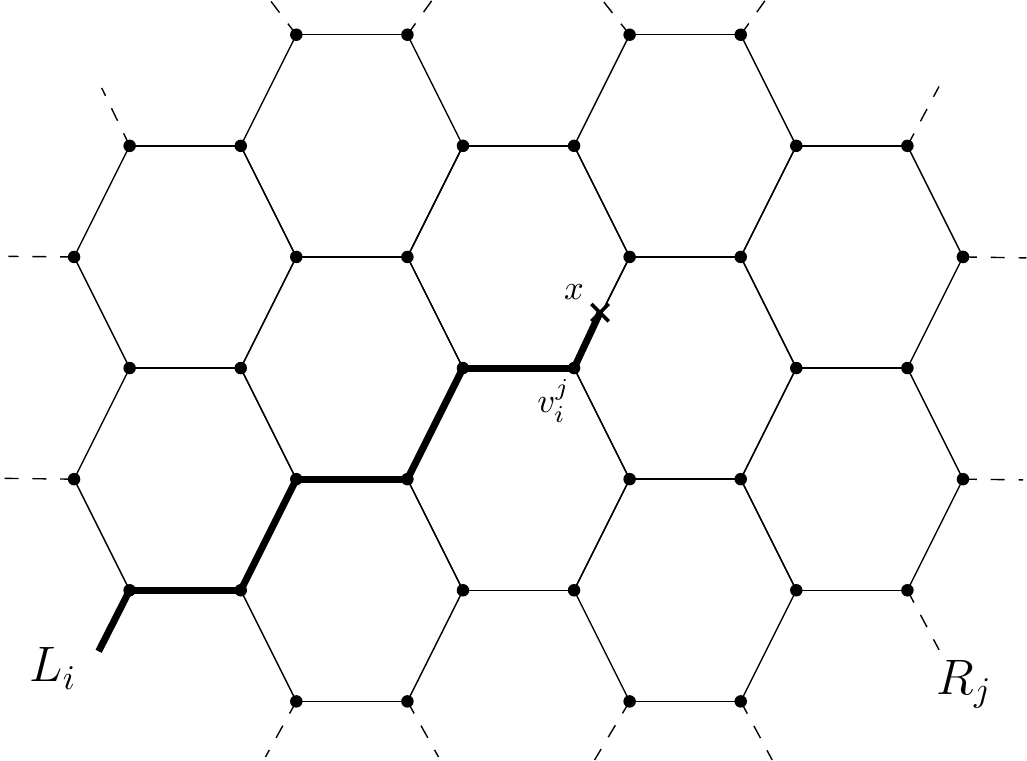}
	}\qquad
	\subfloat[][]{
		\includegraphics[width=0.4\textwidth]{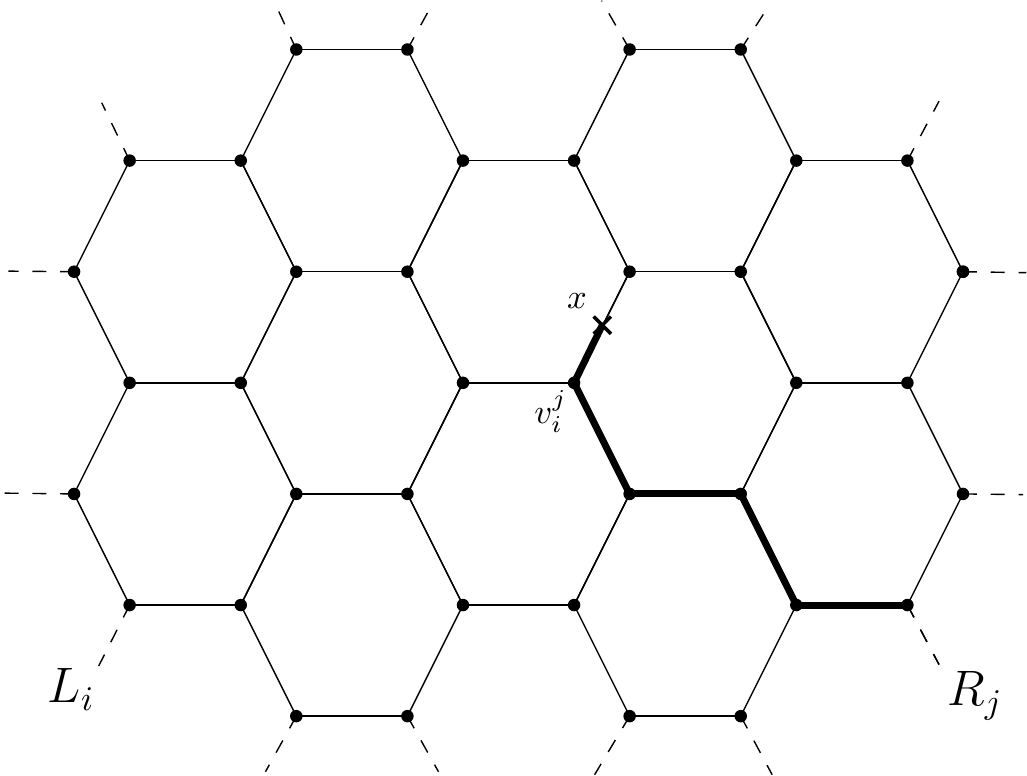}
	}
	\caption{The paths from $-\infty$ to $x$ along $L_i$ (a) and $R_j$ (b) as in the proof of Theorem \ref{thm-sob}.}
	\label{fig-path}
\end{figure}


Finally, summing over $i \in \mathbb{Z}$ yields at
\begin{equation*}
\sum_{i \in \mathbb{Z}} \int_{L_i} |u(x)|^2 \leq 4l\| u' \| _{L^1(\mathcal{G})} \sum_{i \in \mathbb{Z}} \int_{L_i} |u'(\tau)| d\tau \leq 4l \| u' \| ^2 _{L^1(\mathcal{G})}.
\end{equation*}
The same procedure can be adapted to estimate $\sum_{j \in \mathbb{Z}} \int_{R_j} |u(x)|^2 dx$, replacing $I_i^j$ with $J_j^i$ whenever need, so that by (\ref{norm}) we end up with
\begin{equation*}
\|u\|^2 _{L^2(\mathcal{G})} \leq 8l\| u' \| ^2 _{L^1(\mathcal{G})}.
\end{equation*}

\end{proof}

Arguing as in the proof of Theorem 2.3 in \cite{adst}, it can then be proved that Theorem \ref{thm-sob} entails the following two-dimensional Gagliardo-Nirenberg inequality on $\G$

\begin{equation}
	\label{2GN}
	\uLp^p\leq C \uLtwo^2\udot^{p-2}
\end{equation}

\noindent for every $u\in H^1(\G)$ and $p\geq2$ (here $C$ denotes a universal constant).

On the other hand, as for every non-compact metric graph, it is known that also the one-dimensional Gagliardo-Nirenberg inequality

\begin{equation}
	\label{1GN}
	\uLp^p\leq \uLtwo^{\frac{p}{2}+1}\udot^{\frac{p}{2}-1}
\end{equation}

\noindent holds true on $\G$, again for every $u\in H^1(\G)$ and $p\geq2$ (for a simple proof relying on the theory of rearrangements on graphs see for instance \cite{ast-jfa}).

Hence, combining \eqref{2GN}--\eqref{1GN}, a new version of the Gagliardo-Nirenberg inequality can be derived, which we refer to as {\em interpolated Gagliardo-Nirenberg inequality}, that accounts for the dimensional crossover in Theorem \ref{THM 2}. Indeed, for every $p\in[4,6]$ there exists a constant $K_p$, depending only on $p$, such that 

\[
\uLp^p\leq K_p\uLtwo^{p-2}\udot^2
\]

\noindent for every $u\in H^1(\G)$ (as the argument is the same, we refer to Corollary 2.4 in \cite{adst} for a complete proof of this fact).

	\section{Existence result: proof of Theorem \ref{THM}}
	\label{sec:competitor}
	
	Throughout this section, we provide the proof of Theorem \ref{THM}, showing that whenever $p$ is smaller than $4$, ground states always exist for every value of the mass. 
	
	To this purpose, we first recall a general compactness result, originally proved in Proposition 3.3 of \cite{adst}, which is valid for every doubly periodic metric graphs, so that it also applies in the case of the two-dimensional hexagonal grid we are dealing with.
	
	\begin{prop}[Proposition 3.3, \cite{adst}]
		\label{prop_comp}
		Let $p<6$ and $\mu<0$. If $\elevel(\mu)<0$, then a ground state with mass $\mu$ exists. 
	\end{prop}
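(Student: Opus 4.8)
The plan is to run a concentration--compactness argument tailored to the double periodicity of $\G$, turning the assumption $\elevel(\mu)<0$ into the mechanism that rules out loss of mass. First I would fix a minimizing sequence $(u_n)\subset\HmuG$, so that $\|u_n\|_{L^2(\G)}^2=\mu$ and $E(u_n,\G)\to\elevel(\mu)$. Since $2<p<6$, the one--dimensional Gagliardo--Nirenberg inequality \eqref{1GN} yields precisely the lower bound \eqref{subcritical}; as the exponent $\frac p2-1$ is strictly smaller than $2$, the kinetic term dominates and $(\|u_n'\|_{L^2(\G)})$ stays bounded. Together with the mass constraint this makes $(u_n)$ bounded in $H^1(\G)$, so up to a subsequence $u_n\rightharpoonup u$ weakly in $H^1(\G)$, with local strong $L^2$ and a.e.\ convergence on every bounded portion of the graph.

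The key step is to exclude vanishing. If $(u_n)$ were to spread out, i.e.\ if the supremum over all periodicity cells of the local $L^2$--mass tended to $0$, then a Lions--type vanishing lemma adapted to $\G$ (interpolating $L^p$ against $L^2$ and $H^1$ cell by cell and summing) would force $\|u_n\|_{L^p(\G)}\to0$; but then $\liminf_n E(u_n,\G)\ge\liminf_n\tfrac12\|u_n'\|_{L^2(\G)}^2\ge0$, contradicting $\elevel(\mu)<0$. Hence there is $\delta>0$ and a sequence of lattice cells on which the mass of $u_n$ stays $\ge\delta$. Here the periodicity of $\G$ is decisive: translating each $u_n$ by the corresponding lattice vector is an isometry of $\G$ that leaves both $\|\cdot\|_{L^2(\G)}$ and $E(\cdot,\G)$ unchanged, so after relabelling I may assume the mass remains concentrated in a fixed cell, whence $u\not\equiv0$.

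It then remains to promote the weak limit to a genuine minimizer, i.e.\ to show that no mass escapes. Writing $w_n:=u_n-u\rightharpoonup0$, the Brezis--Lieb lemma splits all three terms, giving $\|u_n\|_{L^2(\G)}^2=\|u\|_{L^2(\G)}^2+\|w_n\|_{L^2(\G)}^2+o(1)$ and $E(u_n,\G)=E(u,\G)+E(w_n,\G)+o(1)$, whence, setting $m:=\|u\|_{L^2(\G)}^2\in(0,\mu]$ and renormalising $u$ and $w_n$ to the exact masses $m$ and $\mu-m$, one gets $\elevel(\mu)=E(u,\G)+\lim_nE(w_n,\G)\ge\elevel(m)+\elevel(\mu-m)$. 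The conclusion follows from the strict subadditivity $\elevel(\mu)<\elevel(m)+\elevel(\mu-m)$ for $0<m<\mu$, which forces $m=\mu$; strong $H^1$ convergence and $E(u,\G)=\elevel(\mu)$ then drop out of the same splitting.

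I expect the strict subadditivity to be the main obstacle, and I would prove it from a single scaling identity: for $\lambda\ge1$ the amplitude rescaling $\sqrt\lambda\,v$ (the only dilation available, since the edge lengths of $\G$ are fixed) satisfies $E(\sqrt\lambda\,v,\G)\le\lambda E(v,\G)$ because $p>2$, so $\elevel(\lambda\nu)\le\lambda\elevel(\nu)$, i.e.\ $\nu\mapsto\elevel(\nu)/\nu$ is non--increasing; this gives subadditivity at once. Strictness is where $\elevel(\mu)<0$ re-enters: near-minimizers at a mass $\nu$ with $\elevel(\nu)<0$ have $\|v\|_{L^p(\G)}$ bounded below, so the discarded quantity $\frac{\lambda^{p/2}-\lambda}{p}\|v\|_{L^p(\G)}^p$ is strictly positive and the scaling inequality becomes strict. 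A short case analysis (the piece carrying negative energy is automatically the larger one, by monotonicity of $\elevel(\nu)/\nu$) then upgrades subadditivity to the strict inequality used above. The delicate points to get right are the cell-wise vanishing lemma on the metric graph and the bookkeeping that lets one replace approximate masses by exact ones without spoiling the energy estimates.
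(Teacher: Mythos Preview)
The present paper does not prove this proposition at all: it is quoted from \cite{adst} (Proposition 3.3 there) and invoked as a black box in the proof of Theorem~\ref{THM}, so there is no in-paper argument to compare against. Your concentration--compactness outline is correct and is essentially the route taken in \cite{adst}: boundedness of minimizing sequences from the subcritical one-dimensional Gagliardo--Nirenberg inequality \eqref{1GN}; exclusion of vanishing via a cell-wise Lions lemma together with the lattice-translation invariance of the doubly periodic graph; Brezis--Lieb splitting of mass and energy; and strict subadditivity of $\mu\mapsto\elevel(\mu)$ under the hypothesis $\elevel(\mu)<0$, obtained from the amplitude scaling $v\mapsto\sqrt{\lambda}\,v$ (the only dilation available on a graph with fixed edge lengths). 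The two points you flag as delicate---the graph-adapted vanishing lemma and the bookkeeping that replaces approximate masses by exact ones---are precisely where the work sits, and both are routine once the uniform $H^1$ bound is in hand. One cosmetic remark: the hypothesis ``$\mu<0$'' in the displayed statement is a typo in the paper for ``$\mu>0$''; your argument correctly treats $\mu$ as a positive mass.
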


	\begin{proof}[Proof of Theorem \ref{THM}]
		In view of Proposition \ref{prop_comp}, given $\mu>0$, it is enough to prove that $\elevel(\mu)<0$ to show that ground states in $\HmuG$ exist.
		
		We henceforth consider the following construction. For every $i\in\zz$, recall that $L_i$ is identified with a real line $(-\infty,+\infty)$ through a coordinate $x_{L_i}$, and we are free to choose which vertex $\vv\in L_i$ corresponds to the origin $x_{L_i}(\vv)=0$. We thus fix the origin of each $L_i$ in the following way. First, set the origin of $L_0$ at any of its vertices being the left endpoint of a horizontal edge. Then, since the up-directed edge on the left of this vertex connects $L_0$ with $L_1$, set the origin of $L_1$ at the endpoint of this bridging edge. Let then $\overline{L}_0$ be the straight line in the plane passing through both the origin of $L_0$ and the one of $L_1$. For each $i\in\zz$, $\overline{L}_0$ intersects $L_i$ in exactly one vertex of $\G$, so that we set this point to be the origin of $L_i$.
		
		Note that the intersection of $\overline{L}_0$ with the whole grid $\G$ is a disjoint union of edges, each joining a couple of paths $L_i,L_{i+1}$, for some $i\in\zz$. Precisely, we write
		
		\[
		\overline{L}_0\cap\G=\bigsqcup_{i\in\zz}b_{2i}^0
		\]
		
		\noindent where, given $i\in\zz$, $b_{2i}^0$ denotes the bridging edge between $L_{2i}$ and $L_{2i+1}$ that belongs to $\overline{L}_0$.
		
		Similarly, for every $k\in\zz$, let $\overline{L}_k$ be the straight line in the plane parallel to $\overline{L}_0$ passing through the vertex of $\vv\in L_0$ corresponding to $x_{L_0}(\vv)=k$, so that
		
		\[
		\overline{L}_k\cap\G=\begin{cases}
		\bigsqcup_{i\in\zz}b_{2i}^k & \text{if $k$ even}\\
		\bigsqcup_{i\in\zz}b_{2i-1}^k & \text{if $k$ odd}
		\end{cases}
		\]
		
		\noindent where again $b_{2i}^k$ (resp. $b_{2i-1}^k$) is the edge of $\G$ joining $L_{2i}$ with $L_{2i+1}$ (resp. $L_{2i-1}$ with $L_{2i}$) that belongs to $\overline{L}_k$.
		
		Moreover, identifying each $b_j^k$ with the interval $[0,1]$ through the coordinate $x_{b_j^k}:b_j^k\to[0,1]$, we stick to the following agreement: if $j\geq0$, then we set $x_{b_j^k}(\vv)=0$ for $\vv=b_j^k\cap L_j$, whereas if $j<0$, then we set $x_{b_j^k}(0)=\vv$ for $\vv=b_j^k\cap L_{j+1}$.
		
		Then, given $\varepsilon>0$, we define (see Figure \ref{fig-neg})
		
		\[
		u_\varepsilon(x):=\begin{cases}
		e^{-\varepsilon(|x|+|i|)} & \text{if $x\in L_i$, for some $i\in\zz$}\\
		e^{-\varepsilon(|x|+|i|+j)} & \text{if $x\in b_j^i$, for some $j,k\in\zz$, $j\geq0$}\\
		e^{-\varepsilon(|x|+|i|+|j+1|)} & \text{if $x\in b_j^i$, for some $j,k\in\zz$, $j<0$}\,.
		\end{cases}
		\]
		
		
		\begin{figure}
			\centering
			\includegraphics[width=0.5\textwidth]{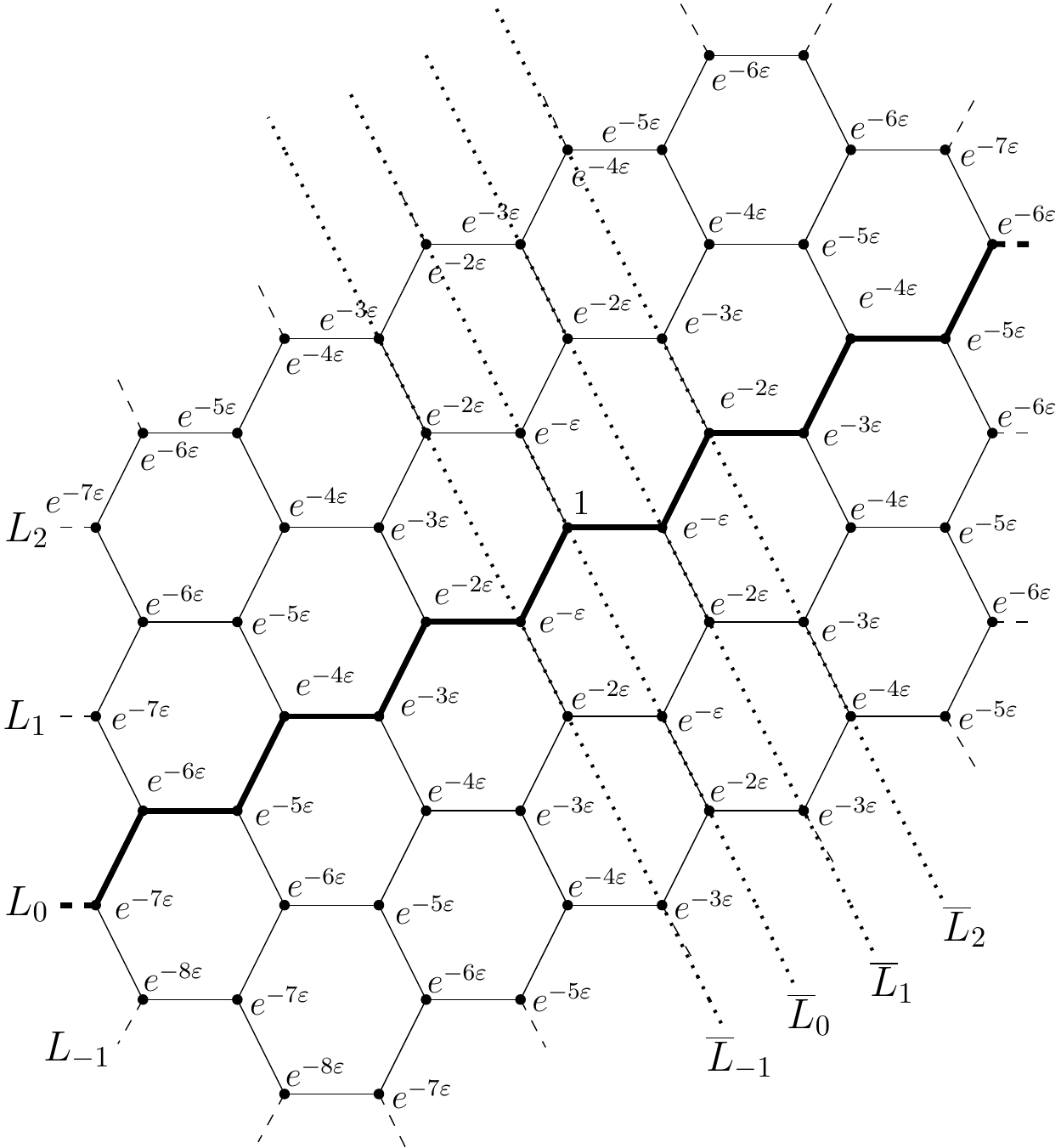}
			\caption{the construction of the function $u$ in the proof of Theorem \ref{THM}, with the straight lines $\overline{L}_i$ and the values of $u$ at the vertices of $\G$.}
			\label{fig-neg}
		\end{figure}
		
		\noindent By construction, $u\in H^1(\G)$ and, given $i\in\zz$,
		
		\[
		\begin{split}
		\int_{L_i}|u_\varepsilon|^p\dx=&\,2\int_0^{+\infty}e^{-p\varepsilon (x+|i|)}\dx=\frac{2e^{-p\varepsilon |i|}}{p\varepsilon}\\
		\int_{\overline{L}_i\cap\G}|u_\varepsilon|^p\dx=&\int_0^{+\infty}e^{-p\varepsilon (x+|i|)}\dx=\frac{e^{-p\varepsilon |i|}}{p\varepsilon}
		\end{split}
		\]
		
		\noindent for every $p\geq2$ and
		
		\[
		\begin{split}
		\int_{L_i}|u_\varepsilon'|^2\dx=&\,2\varepsilon^2\int_0^{+\infty}e^{-2\varepsilon(|x|+|i|)}\dx=\varepsilon e^{-2\varepsilon|i|}\\
		\int_{\overline{L}_i\cap\G}|u_\varepsilon'|^2\dx=&\,\varepsilon^2\int_0^{+\infty}e^{-2\varepsilon (x+|i|)}\dx=\frac{\varepsilon e^{-2\varepsilon |i|}}{2}\,.
		\end{split}
		\]
		
		\noindent Since $\G=\Big(\bigcup_{i\in\mathbb{Z}}L_i\Big)\cup\Big(\bigcup_{i\in\mathbb{Z}}\overline{L}_i\cap\G\Big)$, we get
		
		\[
		\begin{split}
		\int_\G|u_\varepsilon|^p\dx=&\sum_{i \in \mathbb{Z}}\int_{L_i}|u_\varepsilon|^p\dx+\sum_{i \in \mathbb{Z}}\int_{\overline{L}_i\cap\G}|u_\varepsilon|^p\dx=3\Big(\frac{1}{p\varepsilon}+2\sum_{i=1}^{\infty}\frac{e^{-p\varepsilon i}}{p\varepsilon}\Big)=\frac{3(e^{p\varepsilon}+1)}{p\varepsilon(e^{p\varepsilon}-1)}\\
		\int_\G|u_\varepsilon'|^2\dx=&\sum_{i \in \mathbb{Z}}\int_{L_i}|u_\varepsilon'|^2\dx+\sum_{i \in \mathbb{Z}}\int_{\overline{L}_i\cap\G}|u_\varepsilon'|^2\dx=3\Big(\frac{\varepsilon}{2}+\sum_{i=1}^{\infty}\varepsilon e^{-2\varepsilon i}\Big)=\frac{3\varepsilon(e^{2\varepsilon}+1)}{2(e^{2\varepsilon}-1)}\,.
		\end{split}
		\]
		
		\noindent Hence, setting
		
		\[
		k_\varepsilon:=\Big(\,\frac{2\varepsilon(e^{2\varepsilon}-1)}{3(e^{2\varepsilon}+1)}\mu\,\Big)^{1/2}
		\]
		
		\noindent and letting 
		
		\[
		v_\varepsilon(x):=k_\varepsilon u_\varepsilon(x)\qquad \forall\, x\in\G
		\]
		
		\noindent yields at
		
		\[
		\|v_\varepsilon\|_{L^2(\G)}^2=k_\varepsilon^2\int_\G|u_\varepsilon|^2\dx=\mu\,.
		\]
		
		\noindent Therefore, $v_\varepsilon\in\HmuG$ for every $\varepsilon>0$ and, taking advantage of the explicit formula above, as $\varepsilon\to0$
		
		\[
		E(v_\varepsilon,\G)=\frac{1}{2}k_\varepsilon^2\int_\G|u_\varepsilon'|^2\dx-\frac{1}{p}k_\varepsilon^p\int_\G|u_\varepsilon|^p\dx\sim\frac{1}{2}\mu\varepsilon^2-\frac{1}{p}C\mu^{p/2}\varepsilon^{p-2}
		\]
		
		\noindent for some $C>0$ depending only on $p$. Thus, whenever $p<4$ and $\varepsilon$ is small enough, we have
		
		\[
		\elevel(\mu)\leq E(v_\varepsilon,\G)<0
		\]
		
		\noindent and we conclude.
	\end{proof}


\end{document}